\documentclass{amsart}
\usepackage{epsfig,amssymb}

\theoremstyle{plain}
\newtheorem{theorem}{Theorem}[section]
\newtheorem{proposition}[theorem]{Proposition}
\newtheorem{lemma}[theorem]{Lemma}

\theoremstyle{definition}
\newtheorem*{definition}{Definition}

\theoremstyle{remark}
\newtheorem{remark}[theorem]{Remark}

\begin{document}

\title[Parity criterion and Dehn twists]%
 {Parity criterion and Dehn twists for \\unstabilized Heegaard splittings}

\author{Jung Hoon Lee}
\address{School of Mathematics, KIAS\\
207-43, Cheongnyangni 2-dong, Dongdaemun-gu\\
Seoul, Korea. Tel:\,+82-2-958-3736}
 \email{jhlee@kias.re.kr}

\subjclass[2000]{Primary 57M50} \keywords{Heegaard splitting,
parity condition, unstabilized}

\begin{abstract}
We give a parity condition of a Heegaard diagram to show that it
is unstabilized. This improves the result of \cite{Lee}. As an
application , we construct unstabilized Heegaard splittings by
Dehn twists on any given Heegaard splitting.
\end{abstract}

\maketitle

\section{Introduction}
For a closed $3$-manifold, a Heegaard splitting is a decomposition
of the manifold into two handlebodies. (For a $3$-manifold with
non-empty boundary, the manifold is decomposed into two
compression bodies along their common ``plus" boundary.)

The motivation of this paper started from tunnel number one knots.
Consider a tunnel number one knot $K$ in $S^3$ and an unknotting
tunnel $t$ for $K$. Consider two properly embedded arcs
$\gamma_1$, $\gamma_2$ in the exterior of $K$ which have nothing
to do with $t$. Suppose $K\cup \gamma_1\cup \gamma_2$ gives a
genus three Heegaard splitting of exterior of $K$. Is it
irreducible (or unstabilized)? In \cite{Kobayashi3}, Kobayashi
showed that every genus $g\ge 3$ Heegaard splitting of $2$-bridge
knot exterior is reducible. So the question is that whether there
exists an irreducible genus three Heegaard splitting of a tunnel
number one knot exterior which is not $2$-bridge. When a
non-minimal genus Heegaard splitting is given, in general it is
not an easy problem to show that it is irreducible or cannot be
destabilized.

However, there are infinitely many examples of manifolds having
non-minimal genus irreducible Heegaard splittings. In particular,
there exist $3$-manifolds having arbitrary high genus strongly
irreducible Heegaard splittings (\cite{CG}, \cite{Kobayashi1}).
 Casson and Gordon used the {\bf{rectangle condition}} on Heegaard
 diagrams to show strong irreducibility of such manifolds.
 (See (\cite{MS}, Appendix).)
 One can also refer to the papers \cite{Kobayashi2}, \cite{Lee},
 (\cite{Sedgwick}, section $7$),
 (\cite{Saito}, section $7$) for the rectangle condition.

 Rectangle condition is a condition on Heegaard diagrams for
 strong irreducibility. One can try to find a condition for
 irreducibility. Inspired by the example (torus)$\times S^1$, we
 gave a parity condition in \cite{Lee}, although it is not a weaker
 condition compared to rectangle condition. It is a condition
 on two collections of $3g-3$ essential disks giving pants
 decompositions of the Heegaard surface.

 We improve the parity condition of \cite{Lee}.
 It is known that a reducible Heegaard
 splitting of an irreducible manifold is stabilized.
 Hence, if the manifold under consideration is irreducible, the
 Heegaard splitting is irreducible. Figure 1. shows the relations
 of rectangle condition and parity condition for genus $g\ge 2$
 Heegaard splittings of irreducible manifolds.

 \begin{figure}[h]
   \centerline{\includegraphics[width=7cm]{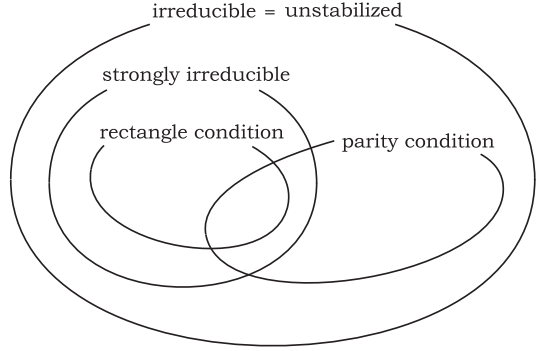}}
    \caption{Genus $g\ge 2 $ Heegaard splittings of irreducible manifolds}
\end{figure}

 \begin{theorem}
Let $M=H_1\cup_S H_2$ be a genus $g\ge 2$ Heegaard splitting of a
$3$-manifold $M$ and $\{D_1,D_2\cdots,D_g\}$ and
$\{E_1,E_2,\cdots,E_g\}$ be complete meridian disk systems of
$H_1$ and $H_2$, respectively.

If $|D_i\cap E_j|\equiv 0$ $(\bmod\,{2})$ for all the pairs
$(i,j)$, then $H_1\cup _S H_2$ is unstabilized.
\end{theorem}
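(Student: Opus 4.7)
The plan is to argue by contradiction using the mod-$2$ intersection pairing $\langle\cdot,\cdot\rangle$ on $H_1(S;\mathbb{Z}/2)$. Suppose $H_1\cup_S H_2$ is stabilized. Then by the definition of a stabilization there is a pair of essential meridian disks $D\subset H_1$, $E\subset H_2$ whose boundary curves meet transversely in a single point on $S$. Since algebraic and geometric intersection numbers of simple closed curves on an orientable surface agree modulo $2$, this forces $\langle[\partial D],[\partial E]\rangle = 1$ in $\mathbb{Z}/2$. The entire goal of the proof will then be to derive a contradiction from the parity hypothesis.

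The algebraic input I would invoke is the standard ``half lives, half dies'' fact: the subspace $L_i:=\ker\bigl(H_1(S;\mathbb{Z}/2)\to H_1(H_i;\mathbb{Z}/2)\bigr)$ is a $g$-dimensional Lagrangian of $H_1(S;\mathbb{Z}/2)$, and the boundary classes of any complete meridian disk system form a $\mathbb{Z}/2$-basis of it (they lie in $L_i$ and span a $g$-dimensional subspace because cutting $H_i$ along the disks produces a ball). Consequently $\{[\partial D_i]\}_{i=1}^{g}$ is a basis of $L_1$ and $\{[\partial E_j]\}_{j=1}^{g}$ is a basis of $L_2$, so there exist $a_i,b_j\in\mathbb{Z}/2$ with
\[
[\partial D]=\sum_{i=1}^{g}a_i\,[\partial D_i],\qquad [\partial E]=\sum_{j=1}^{g}b_j\,[\partial E_j].
\]

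Bilinearity of $\langle\cdot,\cdot\rangle$ together with the hypothesis $|D_i\cap E_j|\equiv 0\pmod 2$ then produces
\[
1=\langle[\partial D],[\partial E]\rangle=\sum_{i,j}a_i b_j\,\langle[\partial D_i],[\partial E_j]\rangle\equiv\sum_{i,j}a_i b_j\,|D_i\cap E_j|\equiv 0\pmod{2},
\]
a contradiction, so $H_1\cup_S H_2$ must be unstabilized. The only non-formal ingredient is the identification of $\mathrm{span}\{[\partial D_i]\}$ with $L_1$ over $\mathbb{Z}/2$, which is standard handlebody theory, so I do not expect this to be the hard part. The rest is bilinear algebra over $\mathbb{Z}/2$. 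The reduction mod $2$ is essential to the approach: the hypothesis is a statement about raw geometric counts, and it translates cleanly into a statement about the intersection form only after the orientation-dependent signs at intersection points are killed.
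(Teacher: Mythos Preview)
Your argument is correct, and it reaches the contradiction by a genuinely different route from the paper. The paper never passes to homology: instead it enlarges each meridian system to a pants decomposition by adding supplementary disks $\bar D_i$, $\bar E_j$, proves (Lemmas~2.1--2.2 and~3.1) that the even-parity hypothesis propagates to all $3g-3$ curves on each side, and then performs a geometric cut-and-connect surgery on $\partial D$ along the pants curves to write it as a disjoint union of loops, each isotopic to some $\partial D_{i_k}$, $\partial\bar D_{i_k}$, or trivial. The parity of $|D\cap E|$ is then chased through these pieces. Your approach replaces all of that machinery with one observation: over $\mathbb{Z}/2$ the classes $[\partial D_i]$ already span the Lagrangian $L_1=\ker\bigl(H_1(S;\mathbb{Z}/2)\to H_1(H_1;\mathbb{Z}/2)\bigr)$, so $[\partial D]$ is a $\mathbb{Z}/2$-combination of them with no supplementary curves needed, and bilinearity of the mod-$2$ intersection form finishes immediately. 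Your proof is shorter and more conceptual, at the price of invoking half-lives--half-dies; the paper's proof stays entirely at the level of curves and cut-and-paste, which keeps it self-contained and makes the link to the earlier pants-decomposition parity condition explicit. Morally the two arguments are the same computation---the cut-and-connect operation is a geometric avatar of expressing $[\partial D]$ in the basis $\{[\partial D_i]\}$---but the packaging is quite different.
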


As an application, in section $4$ we construct unstabilized
Heegaard splittings from any given splitting by doing a sequence
of Dehn twists.

\section{Planar decomposition and pants decomposition of a
surface}

 Let $H$ be a genus $g\ge 2$ handlebody and denote $\partial H$ by $S$.
 A collection of essential disks $\{D_1,D_2,\cdots,D_g\}$ in $H$
 is called a {\bf{complete meridian disk system}} for $H$ if
 the result of cutting $H$ along $\bigcup^{g}_{i=1} D_i$ is a $3$-ball.
 The corresponding result of cutting $S$ by
 $\bigcup^{g}_{i=1}\partial D_i$ is a planar surface, which is a
 $2g$-punctured sphere. We call it a {\bf{planar decomposition}}
 of $S$. This terminology was used in (\cite{Sedgwick}, section
 $7$).

 In another way, we can decompose $H$ and $S$ into smaller pieces
 with larger number of essential disks. Suppose a collection of
 mutually disjoint essential disks $\{D_1,D_2,\cdots,D_{3g-3}\}$
 cuts $H$ into $3$-balls $B_1,B_2,\cdots,B_{2g-2}$.
 We can imagine the shape of $B_i$ as a solid pair of pants.
 Let $P_i$ be the pair of pants $S\cap B_i$ ($i=1,2,\cdots,2g-2$).
 The decomposition $S=P_1\cup P_2\cup\cdots\cup P_{2g-2}$
 is called a {\bf{pants decomposition}} of $S$.

Let a planar decomposition of $S$ coming from a complete meridian
disk system $\{D_1,D_2,\cdots,D_g\}$ be given. We add $2g-3$ more
essential disks $\{\bar{D}_{g+1},\cdots,\bar{D}_{3g-3}\}$ of $H$
to the collection so that
$\mathcal{D}=\{D_1,D_2,\cdots,D_g,\bar{D}_{g+1},\cdots,\bar{D}_{3g-3}\}$
gives rise to a pants decomposition of $S$.
 Let $S=P_1\cup P_2\cup\cdots\cup P_{2g-2}$ be the new pants
 decomposition thus obtained.
We call $\bar{D}_i$ ($i=g+1,\cdots,3g-3$) as a {\bf{supplementary
essential disk}} for later use.

Give red color to $\partial D_i$ ($i=1,2,\cdots,g$) and blue color
to $\partial\bar{D}_j$ ($j=g+1,\cdots,3g-3$).

\begin{lemma}
For any $P_i$, if any two components among the three components of
$\partial P_i$ are red and the third is blue, convert the
blue-colored component also into red color. Iterate this operation
successively until it stops.

Then all curves constituting the pants decomposition of $S$ become
red colors.
\end{lemma}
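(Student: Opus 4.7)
The plan is to recast the lemma as a combinatorial statement about the dual graph $G$ of the pants decomposition. Let $G$ have one vertex for each pants $P_i$ and one edge for each decomposition curve, with that edge joining the (at most two) pants containing the curve on its two sides, forming a loop if both sides lie in the same pants. Then $G$ has $2g-2$ vertices and $3g-3$ edges, and each vertex has exactly three incidences (a loop contributing two). In these terms, the recoloring rule says: at a vertex with exactly two red incidences and one blue, change the blue incidence to red.

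The first observation is that the initial blue edges $\partial\bar D_{g+1},\ldots,\partial\bar D_{3g-3}$ form a spanning tree $T$ of $G$. Indeed, cutting $S$ along the red curves $\partial D_1,\ldots,\partial D_g$ yields the connected planar $2g$-punctured sphere coming from the complete meridian disk system; in dual terms, deleting the red edges leaves a connected spanning subgraph on $2g-2$ vertices with $(3g-3)-g = 2g-3$ edges, and having $V-1$ edges in a connected graph on $V$ vertices forces a spanning tree.

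Next I would analyze the terminal state. Termination itself is immediate because each application of the rule strictly increases the number of red edges. Let $B'$ denote the blue edges remaining at termination, and let $d_B(v)$ denote the number of blue incidences at $v$ (equivalently, the number of blue boundary components of $P_v$). The terminal condition says no vertex has exactly two red and one blue incidence, so $d_B(v)\in\{0,2,3\}$ for every $v$. Because the process only recolors blue edges to red, $B'\subseteq T$; thus $B'$ is a subforest of $T$, hence a forest with no loops or multi-edges. Every nontrivial component of a forest is a tree on at least two vertices and therefore has at least two leaves, that is, vertices of degree~$1$. Since no vertex has $d_B(v)=1$, every component of $B'$ must be a single isolated vertex, so $B'$ has no edges and all curves are red.

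The main subtlety worth spelling out is the handling of loops in $G$, that is, meridian curves with both boundary circles lying in a single pants. Loops contribute $2$ to the incidence count at their vertex, which keeps the identity $d_R(v)+d_B(v)=3$ valid and ensures the rule is applied consistently; moreover, loops never appear in the spanning tree $T$, which is what allows the forest argument in the previous paragraph to go through cleanly. Once this bookkeeping is in place, the lemma reduces to the short graph-theoretic observation above.
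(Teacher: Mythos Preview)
Your argument is correct and is a genuinely different route from the paper's. The paper argues directly on the planar $2g$-punctured sphere $\Sigma$ obtained by cutting $S$ along the red curves: since every blue curve is separating in $\Sigma$, at each stage there is an \emph{innermost} blue curve, and the pants it cuts off has its other two boundary components red (either original boundary circles of $\Sigma$ or previously converted curves), so the rule applies and the process marches inward until no blue remains. You instead pass to the dual graph, identify the blue edges as a spanning tree, and analyze the terminal state by the pigeonhole observation that a nonempty forest must have a leaf, contradicting $d_B(v)\neq 1$. The two arguments are in a sense dual---a leaf of your tree is exactly an innermost blue curve in the paper's picture---but your presentation is more rigorous and makes explicit the two facts the paper leaves implicit: that the blue curves form a tree (equivalently, that an innermost blue curve always exists at every stage, not just the first), and that loops in $G$ can only be red. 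The paper's version has the virtue of being shorter and more visual; yours has the virtue of cleanly separating the topology (blue edges span a tree) from the purely combinatorial endgame.
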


\begin{figure}[h]
   \centerline{\includegraphics[width=10cm]{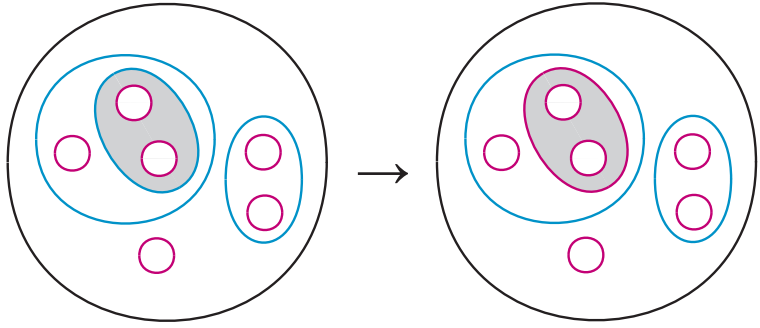}}
    \caption{The color is changed.}
\end{figure}

\begin{proof}
Originally the planar decomposition of $S$ gave a cutting of $S$
into a $2g$-punctured sphere with red boundaries. We added $2g-3$
essential blue loops on it to get a pants decomposition of $S$. So
there exists a pants having two red loops and one blue loop as its
boundary components. Then the color of the blue loop is changed to
red. See figure 2. In this way, an innermost blue loop co-bounds a
pants with two red loops, and it is changed into red color. Hence,
finally all curves come to have red colors.
\end{proof}

Let $\gamma$ be an essential simple closed curve in $S$. Note that
$\gamma$ can intersect $D_i$ or $\bar{D}_j$ only at the boundary
of the disk ($i=1,2,\cdots,g$ and $j=g+1,\cdots,3g-3$). We assume
that $\gamma$ intersects $(\bigcup^{g}_{i=1} D_i)\cup
(\bigcup^{3g-3}_{j=g+1}\bar{D}_{j})$ minimally. Let $|\cdot|$
denote the number of elements of a set.

\begin{lemma}
Suppose $|\gamma\cap D_i|\equiv 0$ $(\bmod\,{2})$ for all
$(i=1,2,\cdots,g)$. Then $|\gamma\cap \bar{D}_j|\equiv 0$
$(\bmod\,{2})$ for all $(j=g+1,\cdots,3g-3)$.
\end{lemma}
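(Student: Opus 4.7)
The plan is to propagate the even-parity hypothesis through the iterative color-conversion procedure supplied by Lemma 2.1. The engine of the argument is a purely local parity statement on each pair of pants: if $P$ is a component of the pants decomposition with boundary curves $a, b, c$, then since $\gamma$ intersects $\partial P$ transversely and $\gamma \cap P$ is a disjoint union of properly embedded arcs, the total
\[
|\gamma \cap a| + |\gamma \cap b| + |\gamma \cap c|
\]
counts twice the number of arcs in $\gamma \cap P$, and is therefore even. Consequently, if any two of these three intersection numbers are even, the third is also even.

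First I would make this local parity statement precise (observing that the minimality assumption on $\gamma$ ensures the intersection numbers are well-defined and realized geometrically, but noting that the counting argument itself is insensitive to this). Next, I would invoke Lemma 2.1 to supply an ordering of the blue curves $\partial\bar{D}_{g+1}, \dots, \partial\bar{D}_{3g-3}$ such that the $k$-th curve, at the moment it is recolored, co-bounds a pair of pants with two curves that are already red.

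With this in place, the proof proceeds by induction on the step of the recoloring procedure. The base case is the hypothesis: every originally red curve $\partial D_i$ satisfies $|\gamma \cap D_i| \equiv 0 \pmod 2$. For the inductive step, when a blue curve $\partial\bar{D}_j$ is about to be recolored, it lies in a pants whose other two boundary curves are red and, by the inductive hypothesis, already known to satisfy the even-parity property; the local pants parity then forces $|\gamma \cap \bar{D}_j| \equiv 0 \pmod 2$. Because Lemma 2.1 guarantees that the procedure eventually converts every blue curve to red, the conclusion holds for all $j = g+1, \dots, 3g-3$.

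I do not expect a serious obstacle here; the only point that requires care is confirming that an arc of $\gamma \cap P$ cannot have an endpoint in the interior of $P$, so that every arc genuinely contributes exactly two endpoints to $\partial P$. This follows from the fact that $\gamma$ is a simple closed curve in $S$ in general position with respect to $\partial P$, which is part of the standing setup before the lemma.
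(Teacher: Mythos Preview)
Your proposal is correct and follows essentially the same approach as the paper: both arguments rest on the local parity observation that $|\gamma\cap a|+|\gamma\cap b|+|\gamma\cap c|$ is even on each pair of pants, combined with the iterative recoloring procedure of Lemma~2.1 to propagate evenness from the original $\partial D_i$ to every $\partial\bar D_j$. The paper separately treats the degenerate case where $\gamma$ is disjoint from all the disks (so that $\gamma\cap P$ is a closed curve rather than a union of arcs), but this is harmless for your argument since all intersection numbers are then zero.
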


\begin{proof}
Suppose that $\gamma\cap ((\bigcup^{g}_{i=1} D_i)\cup
(\bigcup^{3g-3}_{j=g+1}\bar{D}_{j}))=\emptyset$. Then $\gamma$
lives in a pair of pants of the pants decomposition of $S$. Hence
either it is isotopic to $\partial D_i$ for some $i$ or
$\partial\bar{D}_j$ for some $j$. Then it is obvious that
$|\gamma\cap \bar{D}_j|\equiv 0$ ($\bmod\,{2}$) for all
$(j=g+1,\cdots,3g-3)$.

So we may assume that $\gamma$ intersects a pair of pants $P_k$ in
essential arcs. Let $\partial P_k$ be $l^1_k\cup l^2_k\cup l^3_k$.
If $|\gamma\cap l^1_k|$ and $|\gamma\cap l^2_k|$ are even numbers,
$|\gamma\cap l^3_k|$ should be an even number since $|\gamma\cap
(l^1_k\cup l^2_k\cup l^3_k)|$ should be an even number. (Every
properly embedded arcs in $P_k$ has two endpoints.)

Although the statements of Lemma 2.1 looks irrelevant with this
lemma, we use the idea of Lemma 2.1.
 In the proof of Lemma 2.1, every blue loop, which
was the boundary of $\bar{D}_j$, has eventually become a
red-colored loop because it co-bounded a pair of pants with two
other red loops. Since a red loop has even number of intersection
points with $\gamma$, we can see that $|\gamma\cap
\bar{D}_j|\equiv 0$ ($\bmod\,{2}$) by the conclusion of Lemma 2.1.
\end{proof}

Let $D$ be an essential disk in $H$. Since a handlebody is an
irreducible manifold, we may assume that $D\cap
((\bigcup^{g}_{i=1} D_i)\cup (\bigcup^{3g-3}_{j=g+1}\bar{D}_{j}))$
is a collection of arcs and the intersection is minimal. The
collection of arcs of intersection divides $D$ into subdisks. A
subdisk would be a $2n$-gon such as bigon, $4$-gon, $6$-gon, and
so on. Note that bigons are in one-to-one correspondence with
outermost disks in $D$. The following is a simple observation that
is important for the cut-and-connect operation that will be
discussed in section $3$.

\begin{lemma}
For all $i$ $(i=1,2,\cdots,g)$ and $j$ $(j=g+1,\cdots,3g-3)$,
$|\partial D\cap \partial D_i|\equiv 0$ $(\bmod\,{2})$ and
$|\partial D\cap \partial \bar{D}_j|\equiv 0$ $(\bmod\,{2})$.
\end{lemma}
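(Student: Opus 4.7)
The plan is to exploit the fact that $D$, $D_i$, and $\bar D_j$ are all properly embedded disks in the handlebody $H$, not merely simple closed curves on the surface $S$. First, I would use the minimality assumption already in force to conclude that $D\cap D_i$ (respectively $D\cap\bar D_j$) is a collection of properly embedded arcs in $D$, with no simple closed curve components. If a closed curve of intersection were present, an innermost such circle in $D_i$ would bound a subdisk of $D_i$ disjoint from $D$; since $H$ is irreducible (handlebodies are irreducible), this subdisk together with an innermost disk argument in $D$ yields an isotopy strictly reducing $|D\cap D_i|$, contradicting minimality. Hence only arc components survive.

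Next, once $D\cap D_i$ is known to be a disjoint union of properly embedded arcs in the disk $D$, each such arc has exactly two endpoints, and both endpoints lie on $\partial D\cap\partial D_i$. Therefore
\[
|\partial D\cap\partial D_i| \;=\; 2\cdot\#\{\text{arc components of } D\cap D_i\},
\]
which is even. The same reasoning applied verbatim to $\bar D_j$ in place of $D_i$ gives $|\partial D\cap\partial\bar D_j|\equiv 0\pmod 2$.

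There is no real obstacle here: the only substantive point is the removal of closed curve intersections, and that is already packaged into the minimality hypothesis introduced just before the lemma. The rest is the elementary observation that arcs have two endpoints, which is precisely the same counting principle used in the proof of Lemma~2.2 (every properly embedded arc in a pair of pants has two endpoints), now applied inside the disk $D$ rather than inside a pair of pants.
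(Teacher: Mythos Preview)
Your proposal is correct and follows essentially the same approach as the paper: the paper's proof simply observes that each arc of $D\cap D_i$ (or $D\cap\bar D_j$) has two endpoints, so the boundary intersection number is even. Your version is just a more detailed write-up of the same idea, making explicit the elimination of closed-curve components via irreducibility and minimality, which the paper had already assumed in the paragraph preceding the lemma.
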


\begin{proof}
Since any arc of intersection of $D\cap D_i$ has two endpoints,
$|\partial D\cap \partial D_i|$ would be an even number. The same
holds for $|\partial D\cap \partial\bar{D}_j|$.
\end{proof}

\section{Parity condition}

Let $H_1\cup_S H_2$ be a genus $g\ge 2$ Heegaard splitting of a
$3$-manifold $M$.  Let $\{D_1,D_2,\cdots,D_g\}$ and
$\{E_1,E_2,\cdots,E_g\}$
  be collections of complete meridian disk systems of $H_1$ and
  $H_2$,
  respectively. In \cite{Lee}, we gave a parity condition,
  involving two collections of $3g-3$ essential disks giving pants
  decompositions
  of both handlebodies of Heegaard splitting, to be unstabilized. Here we
  give a more improved condition for an unstabilized Heegaard
  splitting.

\begin{definition}
We say that $H_1\cup_S H_2$ satisfies the {\bf{even parity
condition}}
 if $|D_i\cap E_j|\equiv 0$ ($\bmod\,{2}$) for all the pairs $(i,j=1,2,\cdots,g)$.
\end{definition}

Assume that $H_1\cup_S H_2$ satisfies the even parity condition.
We add $2g-3$ more supplementary essential disks
$\{\bar{D}_{g+1},\cdots,\bar{D}_{3g-3}\}$ of $H_1$ to the
collection $\{D_1,D_2,\cdots,D_g\}$ so that
$\mathcal{D}=\{D_1,D_2,\cdots,D_g,\bar{D}_{g+1},\cdots,\bar{D}_{3g-3}\}$
gives rise to a pants decomposition of $S$. Also we add $2g-3$
more supplementary essential disks
$\{\bar{E}_{g+1},\cdots,\bar{E}_{3g-3}\}$ of $H_2$ to the
collection $\{E_1,E_2,\cdots,E_g\}$ so that
$\mathcal{E}=\{E_1,E_2,\cdots,E_g,\bar{E}_{g+1},\cdots,\bar{E}_{3g-3}\}$
gives rise to a pants decomposition of $S$. We assume that all the
boundaries of disks meet transversely and minimally.

 To simplify the notation, from now on we use the same subscript $i$ for $D_i$
 ($1\le i\le g$) and $\bar{D}_i$ ($g+1\le i \le 3g-3$). Also we
 use the same subscript $j$ for $E_j$ ($1\le j\le g$) and
 $\bar{E}_j$ ($g+1\le j\le 3g-3$).

 By applying the result of Lemma 2.2, we have the following.

 \begin{lemma}
 The parity of number of intersections are as follows.\\
\begin{enumerate}
\item[1)] For each $i$, $|D_i\cap \bar{E}_j|\equiv 0$
$(\bmod\,{2})$ for
all $j$. \\
\item[2)] For each $j$, $|\bar{D}_i\cap E_j|\equiv 0$
$(\bmod\,{2})$ for all
$i$.\\
\item[3)] For all $i$ and $j$, $|\bar{D}_i\cap \bar{E}_j|\equiv 0$
$(\bmod\,{2})$.
\end{enumerate}
\end{lemma}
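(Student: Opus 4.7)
The plan is to apply Lemma 2.2 three times, once for each item, using the symmetry between the two sides $H_1$ and $H_2$ and the fact that $\partial D_i$, $\partial E_j$, $\partial\bar{D}_i$, $\partial\bar{E}_j$ are all essential simple closed curves in $S$ that meet the two pants decompositions $\mathcal{D}$ and $\mathcal{E}$ transversely and minimally (this minimality is built into the setup just before the lemma).

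For item 1), fix $i$ with $1\le i\le g$ and take $\gamma=\partial D_i$. By the even parity condition, $|\gamma\cap\partial E_j|=|D_i\cap E_j|\equiv 0\pmod 2$ for every $j$ with $1\le j\le g$. Now apply Lemma 2.2 on the $H_2$ side, i.e.\ to the pants decomposition $\mathcal{E}$ of $S$ obtained by completing $\{E_1,\dots,E_g\}$ with the supplementary disks $\{\bar E_{g+1},\dots,\bar E_{3g-3}\}$. The conclusion of Lemma 2.2 yields $|\gamma\cap\partial\bar E_j|\equiv 0\pmod 2$ for every $j$ with $g+1\le j\le 3g-3$, which together with the hypothesis covers all $j$.

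Item 2) is the mirror argument on the $H_1$ side. For fixed $j$ with $1\le j\le g$, let $\gamma=\partial E_j$; the even parity condition gives $|\gamma\cap\partial D_i|\equiv 0\pmod 2$ for all $i$ with $1\le i\le g$, and Lemma 2.2 applied to the pants decomposition $\mathcal{D}$ of $S$ then forces $|\gamma\cap\partial\bar D_i|\equiv 0\pmod 2$ for $g+1\le i\le 3g-3$. Finally, for item 3), bootstrap on item 2): fix $i$ with $g+1\le i\le 3g-3$ and set $\gamma=\partial\bar D_i$. By item 2), $|\gamma\cap\partial E_j|\equiv 0\pmod 2$ for every $j$ with $1\le j\le g$, so Lemma 2.2 applied again to $\mathcal{E}$ delivers $|\gamma\cap\partial\bar E_j|\equiv 0\pmod 2$ for the remaining $j$.

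I do not expect any real obstacle: the work is entirely packaged inside Lemma 2.2, and the only thing to watch is to feed that lemma a curve $\gamma$ whose intersections with the \emph{original} meridian system (not the supplementary disks) are known to be even, and to invoke the lemma on the correct side $\mathcal{D}$ or $\mathcal{E}$ at each step. The mild subtlety is that Lemma 2.2 assumed $\gamma$ intersects the union of disks minimally, but this is precisely what is arranged in the paragraph that sets up $\mathcal{D}$ and $\mathcal{E}$, so no further isotopy argument is needed.
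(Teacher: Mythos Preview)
Your proof is correct and follows essentially the same route as the paper: apply Lemma~2.2 with $\gamma=\partial D_i$ against $\mathcal{E}$ for item~1), with $\gamma=\partial E_j$ against $\mathcal{D}$ for item~2), and then bootstrap item~3) from item~2) by taking $\gamma=\partial\bar D_i$ against $\mathcal{E}$. Your added remarks about which side the lemma is invoked on and about the minimality hypothesis being built into the setup are accurate and make the argument slightly more explicit than the paper's version.
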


\begin{proof}
1)\, For each $i$, from the definition of even parity condition,
$|D_i\cap E_j|\equiv 0$ ($\bmod\,{2}$) for all $j$. Then by Lemma
2.2, $|D_i\cap \bar{E}_j|\equiv 0$ ($\bmod\,{2}$) for all $j$.

2)\, For each $j$, from the definition of even parity condition,
$|D_i\cap E_j|\equiv 0\bmod\,{2}$ for all $i$. Then by Lemma 2.2,
$|\bar{D}_i\cap E_j|\equiv 0$ ($\bmod\,{2}$) for all $i$.

3)\, For each $i$, from 2) we can see that $|\bar{D}_i\cap
E_j|\equiv 0$ ($\bmod\,{2}$) for all $j$. Then by Lemma 2.2,
$|\bar{D}_i\cap \bar{E}_j|\equiv 0$ ($\bmod\,{2}$) for all $j$.
Then the result 3) follows. (This can be shown by using the result
1) also.)
\end{proof}

\begin{remark}
Lemma 3.1 means that any pair of disks from the collections
$\mathcal{D}$ and $\mathcal{E}$ have even number of intersections.
So Definition 1 implies (\cite{Lee}, Definition 4).
\end{remark}

Now we give the proof of Theorem 1.1.

\begin{proof}({\textit {of Theorem 1.1}})

\begin{figure}[h]
   \centerline{\includegraphics[width=10cm]{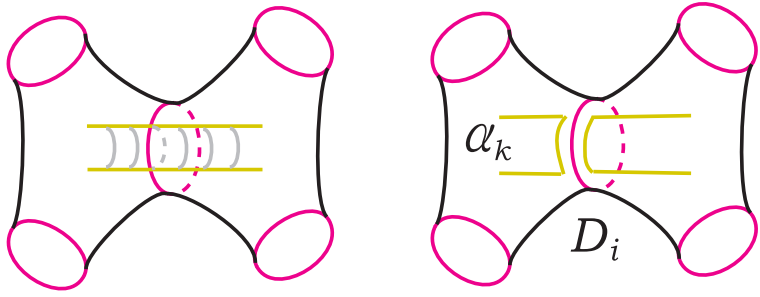}}
    \caption{$\alpha_k$ lives in a pair of pants.}
\end{figure}

Suppose that $H_1\cup_S H_2$ is stabilized. Then there exist
essential disks $D$ in $H_1$ and $E$ in $H_2$ such that $|D\cap
E|=1$. We may assume that the intersection $D\cap ((\bigcup
D_i)\cup(\bigcup \bar{D}_i))$ is a collection of arcs. Cut $D$ by
$(\bigcup D_i)\cup(\bigcup \bar{D}_i)$. Then $D$ is divided into
subdisks. For any arc, say $\gamma$, of intersection $D\cap D_i$
(or $D\cap \bar{D}_i$), two copies of $\gamma$, $\gamma_1$ and
$\gamma_2$ are created on both sides of $D_i$ (or $\bar{D}_i$)
which are parallel to each other. Connect two endpoints of
$\gamma_1$ and also connect two endpoints of $\gamma_2$ by arcs in
$S$ that are parallel and in opposite sides of $D_i$ (or
$\bar{D}_i$) to each other as in the Figure 3. We do this
cut-and-connect operation for all the arcs $D\cap ((\bigcup
D_i)\cup(\bigcup \bar{D}_i))$. Let $\{\alpha_k\}$ be the
collection of loops thence obtained from $\partial D$. Note that
each $\alpha_k$ lives in a pair of pants. Some $\alpha_k$ would be
isotopic to $\partial D_{i_k}$ and some other $\alpha_k$ be
isotopic to $\partial\bar{D}_{i_k}$ and some other $\alpha_k$
would possibly be a trivial loop.

Similarly, from $\partial E$ we obtain a collection of loops
$\{\beta_k\}$ by cut-and-connect operations. Some $\beta_k$ would
be isotopic to $\partial E_{j_k}$ and some other $\beta_k$ would
be isotopic to $\partial\bar{E}_{j_k}$ and some other $\beta_k$
would possibly be a trivial loop.

First we consider the parity of $|D\cap E_j|$ for each $j$ which
will be used in the below. Its parity is equivalent to $\sum_k
|\alpha_k\cap E_j|$ ($\bmod\,{2}$) since in the above
cut-and-connect operation two parallel copies $\gamma_1$ and
$\gamma_2$ were created. It is again equivalent to $\sum_k
|D_{i_k}\cap E_j|+\sum_k |\bar{D}_{i_k}\cap E_j|+\sum
|{\textrm{(trivial loop)}} \cap E_j|$ ($\bmod\,{2}$). By the even
parity condition and Lemma 3.1, it is even. Hence,
$$|D\cap E_j|\equiv 0\pmod{2}$$
By similar arguments, we have the following equalities in
($\bmod\,{2}$).
$$|D\cap \bar{E}_j|\equiv \sum_k |\alpha_k\cap \bar{E}_j|\equiv
\sum_k |D_{i_k}\cap \bar{E}_j|+\sum_k |\bar{D}_{i_k}\cap
\bar{E}_j|+\sum |{\textrm{(trivial loop)}} \cap \bar{E}_j|$$
 By Lemma 3.1, we have
$$|D\cap \bar{E}_j|\equiv 0 \pmod{2}$$
Now we have the following equalities in ($\bmod\,{2}$).
$$|D\cap E|
\equiv \sum_k |D\cap \beta_k| \equiv \sum_k |D\cap E_{j_k}|+
\sum_k|D\cap\bar{E}_{j_k}|+\sum |D\cap {\textrm{(trivial
loop)}}|$$
By above results, we have
$$ |D\cap E|\equiv 0 \pmod{2}$$
This is a contradiction since $|D\cap E|=1$. So we conclude that
$H_1\cup_S H_2$ is unstabilized.
\end{proof}

We give some examples of manifolds admitting a Heegaard splitting
satisfying the even parity condition.

\subsection{Connected sum of $S^2\times S^1$}

\begin{figure}[h]
   \centerline{\includegraphics[width=6cm]{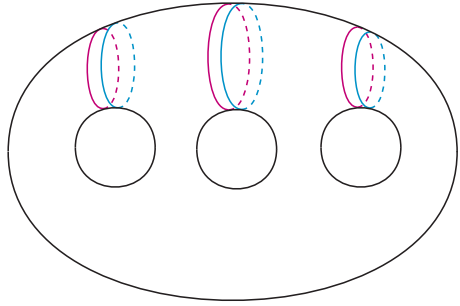}}
    \caption{Connected sum of three copies of $S^2\times
    S^1$, where
    $D_i\cap E_j=\emptyset$ for all $i$, $j$}
\end{figure}

 A connected sum of copies of $S^2\times S^1$ has a
 Heegaard splitting where each pairs of essential disks in both handlebodies are
 disjoint (Figure 4). This is a reducible and unstabilized Heegaard
 splitting.

\subsection{$($ Torus $)\times S^1$}

As an example of irreducible and unstabilized Heegaard splitting
satisfying the even parity condition, we consider a genus three
Heegaard splitting of (torus)$\times S^1$. Heegaard splittings of
manifolds of the form, (surface)$\times S^1$, are classified in
\cite{Schultens}.

\begin{figure}[h]
   \centerline{\includegraphics[width=9cm]{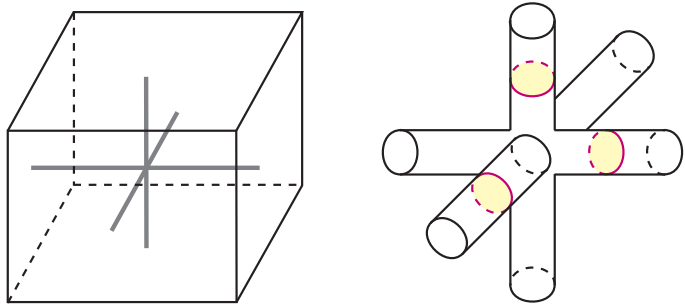}}
    \caption{Genus three handlebody $H_1$ in $({\textrm{torus}}\,)\times S^1$}
\end{figure}

One way of understanding a Heegaard splitting of (torus)$\times
S^1$ is as follows. Since (torus)$\times S^1$ is homeomorphic to
$S^1\times S^1\times S^1$, it can be obtained from a cube by
identifying three pairs of opposite faces. Consider the center of
the cube and center of each face. Connect the center of the cube
with the center of each face by an arc (Figure 5). Take a
neighborhood of it and after the identification of opposite
sectional disks, we get a genus three handlebody $H_1$. Figure 5.
shows a meridian disk system of $H_1$.

\begin{figure}[h]
   \centerline{\includegraphics[width=9cm]{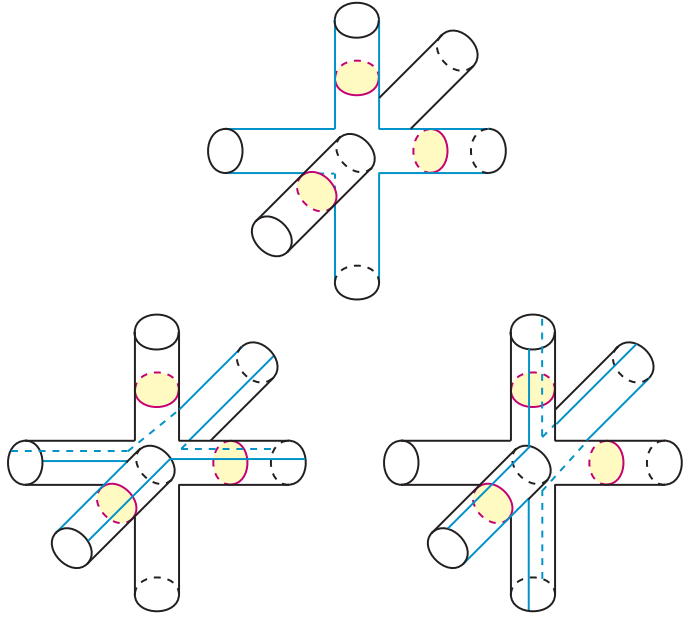}}
    \caption{Boundaries of essential disks of $H_2$ in $\partial H_1$}
\end{figure}

Now $H_2={\textrm{cl}}(H^c_1)$ is also a genus three handlebody.
Figure 6. shows the boundaries of essential disks of $H_2$ in
$\partial H_1$. We can see that it satisfies the even parity
condition. We can also see that it is weakly reducible. So it is
an irreducible and weakly reducible Heegaard splitting.

\section{Dehn twist}

In this section, we construct unstabilized Heegaard splittings by
Dehn twists from a given Heegaard splitting. First we examine the
parity of number of intersections of simple closed curves on a
surface after a Dehn twist. Let $T_{\gamma}:S\rightarrow S$ be a
homeomorphism of a closed surface $S$, which is a Dehn twist of
$S$ along $\gamma$.

\begin{lemma}
For essential simple closed curves $\gamma_1, \gamma_2, \gamma_3$,
we have
$$|T_{\gamma_1}(\gamma_2)\cap \gamma_3|\equiv |\gamma_2\cap
\gamma_3|+|\gamma_1\cap\gamma_2|\cdot|\gamma_1\cap\gamma_3|
\pmod{2}$$
\end{lemma}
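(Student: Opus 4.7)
The plan is to exhibit $T_{\gamma_1}(\gamma_2)$ concretely as a curve agreeing with $\gamma_2$ off a thin annular neighborhood of $\gamma_1$, count its transverse intersections with $\gamma_3$ directly, and then invoke the fact that parity of intersection number is an isotopy invariant.

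First I would fix a small closed annular neighborhood $N$ of $\gamma_1$. By a preliminary isotopy I arrange that $\gamma_2 \cap N$ consists of $|\gamma_1 \cap \gamma_2|$ pairwise disjoint arcs each running once across $N$, and similarly $\gamma_3 \cap N$ consists of $|\gamma_1 \cap \gamma_3|$ such arcs. Since inside $N$ both families are parallel to a cross-sectional arc, I can further isotope, keeping the endpoints fixed on $\partial N$, so that the arcs of $\gamma_2 \cap N$ are disjoint from the arcs of $\gamma_3 \cap N$. Thus every intersection of $\gamma_2$ with $\gamma_3$ occurs outside $N$, and outside $N$ we have $|\gamma_2 \cap \gamma_3|$ of them.

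Second, I apply $T_{\gamma_1}$, which is supported in $N$. Outside $N$ the curve $T_{\gamma_1}(\gamma_2)$ coincides with $\gamma_2$, contributing $|\gamma_2 \cap \gamma_3|$ intersections with $\gamma_3$. Inside $N$, each of the $|\gamma_1 \cap \gamma_2|$ cross-arcs of $\gamma_2$ is replaced by an arc that winds once around the core of $N$, so it meets each of the $|\gamma_1 \cap \gamma_3|$ cross-arcs of $\gamma_3$ in exactly one transverse point. This contributes $|\gamma_1 \cap \gamma_2| \cdot |\gamma_1 \cap \gamma_3|$ intersections inside $N$. Summing,
$$\#\bigl(T_{\gamma_1}(\gamma_2) \cap \gamma_3\bigr) \;=\; |\gamma_2 \cap \gamma_3| \;+\; |\gamma_1 \cap \gamma_2|\cdot|\gamma_1 \cap \gamma_3|,$$
where the left-hand side is the transverse count in the representative I have constructed.

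Third, this count need not be the minimal (geometric) intersection number $|T_{\gamma_1}(\gamma_2) \cap \gamma_3|$, because the representative of $T_{\gamma_1}(\gamma_2)$ produced above may co-bound bigons with $\gamma_3$. However, passing to minimal position is achieved by successive bigon removals, each of which decreases the intersection count by $2$. Hence the parity of the transverse count equals the parity of the minimal count, giving the claimed congruence.

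The main obstacle is the second step, namely verifying that after the twist each twisted arc of $\gamma_2 \cap N$ really contributes exactly one intersection with each cross-arc of $\gamma_3 \cap N$; this requires the initial arrangement in step one so that the only source of intersections inside $N$ after twisting is the winding introduced by $T_{\gamma_1}$. Once that local model is in place, the rest is routine.
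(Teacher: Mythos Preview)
Your proof is correct and follows essentially the same approach as the paper: construct a concrete representative of $T_{\gamma_1}(\gamma_2)$ which meets $\gamma_3$ in $|\gamma_2\cap\gamma_3|+|\gamma_1\cap\gamma_2|\cdot|\gamma_1\cap\gamma_3|$ points, then observe that bigon removals change the count by two so parity survives to minimal position. The paper's proof is simply a terser version of exactly this argument; your write-up supplies the local annulus model that the paper leaves implicit. One small simplification: rather than isotoping the cross-arcs of $\gamma_2$ and $\gamma_3$ inside $N$ to be disjoint, you can just choose $N$ thin enough from the start to miss all points of $\gamma_2\cap\gamma_3$, which immediately gives the setup you want.
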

\begin{proof}
Before Dehn twist, $\gamma_2$ and $\gamma_3$ have $|\gamma_2\cap
\gamma_3|$ number of intersection points. After the Dehn twist
$T_{\gamma_1}$, the number of intersection points is increased by
$|\gamma_1\cap\gamma_2|\cdot|\gamma_1\cap\gamma_3|$. Since
$T_{\gamma_1}(\gamma_2)$ and $\gamma_3$ can possibly have
inessential intersections (bigons), $|T_{\gamma_1}(\gamma_2)\cap
\gamma_3|$ is equivalent to $|\gamma_2\cap
\gamma_3|+|\gamma_1\cap\gamma_2|\cdot|\gamma_1\cap\gamma_3|$ by
($\bmod\,{2}$).
\end{proof}

By an application of Lemma 4.1, we make unstabilized Heegaard
splittings by a single Dehn twist from a given Heegaard splitting
satisfying the even parity condition.

\begin{proposition}
Suppose $H_1\cup_S H_2$ is an unstabilized Heegaard splitting
satisfying the even parity condition with $\mathcal{D}=\{D_1,
D_2,\cdots,D_g\}$ and $\mathcal{E}=\{E_1,E_2,\cdots,E_g\}$. Let
$\gamma$ be an essential simple closed curve in $S$ such that
$|\gamma\cap E_j|\equiv 0$ $(\bmod\,{2})$ for all $j$. Alter
$\mathcal{D}$ to $\mathcal{D'}=\{D'_1,D'_2,\cdots,D'_g\}$ by a
Dehn twist $T_{\gamma}$ and leave $\mathcal{E}$ unchanged.

Then the new Heegaard splitting $H'_1\cup_{S'} H'_2$  satisfies
the even parity condition, hence unstabilized.
\end{proposition}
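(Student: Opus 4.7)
The plan is a direct application of Lemma 4.1 together with Theorem 1.1: we only need to verify that after the Dehn twist, every pair $(D'_i, E_j)$ still intersects in an even number of points.

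First I would observe that since $T_\gamma : S \to S$ is a self-homeomorphism, the image $\mathcal{D}' = T_\gamma(\mathcal{D})$ is a system of pairwise disjoint essential simple closed curves on $S$ that still bound disks in the handlebody $H'_1$ obtained by regluing. Thus $H'_1 \cup_{S'} H'_2$ is a genuine Heegaard splitting, and $\mathcal{D}'$, $\mathcal{E}$ remain complete meridian disk systems. So the only thing to check is the parity of $|D'_i \cap E_j|$, which equals $|T_\gamma(\partial D_i) \cap \partial E_j|$.

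Next, for each pair $(i,j)$ with $1 \le i, j \le g$, I would apply Lemma 4.1 with $\gamma_1 = \gamma$, $\gamma_2 = \partial D_i$, $\gamma_3 = \partial E_j$ to obtain
\[
|T_\gamma(\partial D_i) \cap \partial E_j| \;\equiv\; |\partial D_i \cap \partial E_j| \;+\; |\gamma \cap \partial D_i|\cdot|\gamma \cap \partial E_j| \pmod{2}.
\]
The first term on the right is $0 \pmod{2}$ by the even parity condition assumed on $H_1 \cup_S H_2$. In the second term, the factor $|\gamma \cap \partial E_j|$ is $0 \pmod{2}$ by the hypothesis imposed on $\gamma$, so the whole product vanishes mod $2$. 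Therefore $|D'_i \cap E_j| \equiv 0 \pmod{2}$ for every pair $(i,j)$.

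Finally, this is exactly the even parity condition for $H'_1 \cup_{S'} H'_2$, so Theorem 1.1 immediately gives that the new splitting is unstabilized. There is essentially no hard step here; the only subtlety worth remarking is that the hypothesis is asymmetric (we require $|\gamma \cap E_j| \equiv 0$ but impose no condition on $|\gamma \cap D_i|$), and Lemma 4.1 is precisely what turns this asymmetric hypothesis into the symmetric even parity conclusion, because the extra cross-term factors through $|\gamma \cap \partial E_j|$.
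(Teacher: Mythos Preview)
Your proof is correct and follows essentially the same approach as the paper: apply Lemma~4.1 to each pair $(\partial D_i,\partial E_j)$, use the even parity condition to kill the first term and the hypothesis $|\gamma\cap E_j|\equiv 0$ to kill the cross term, then invoke Theorem~1.1. Your additional remarks on why $\mathcal{D}'$ remains a complete meridian disk system and on the asymmetry of the hypothesis are helpful context but not needed for the argument itself.
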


\begin{proof}
We check the even parity condition for $H'_1\cup_{S'}H'_2$ by
using Lemma 4.1. Note that $\partial D'_i=T_{\gamma}(\partial
D_i)$. By Lemma 4.1,
$$|D'_i\cap E_j|\equiv|D_i\cap E_j|+|\gamma\cap
D_i|\cdot|\gamma\cap E_j| \pmod{2}$$
 In the above equation, $|D_i\cap E_j|$ is even by the even parity
 condition and $|\gamma\cap E_j|$ is even by the hypothesis of
 proposition. So $|D'_i\cap E_j|$ is even for all $i$ and $j$.
\end{proof}

\begin{remark}
Consider a neighborhood $N(\gamma)$ of $\gamma$ in $H_1$ such that
$N(\gamma)\cap {\textrm{cl}}(H_1-N(\gamma))$ is an annuls whose
core is parallel to $\gamma$. The Dehn twist $T_{\gamma}$ of $S$
is equivalent to removing $N(\gamma)$ from $H_1$ and attaching a
solid torus back so that a meridian of the attaching solid torus
is mapped to $\frac{1}{1}$-slope of $\partial N(\gamma)$ (A
longitude of the attaching solid torus is mapped to longitude, a
parallel of $\gamma$.) So in proposition 4.2, the ambient manifold
$M$ is changed to a new manifold $M'$ obtained by
$\frac{1}{1}$-Dehn filling on $\gamma$ in $M$.
\end{remark}

Now we are going to get an unstabilized Heegaard splitting
satisfying the even parity condition by Dehn twists from a
Heegaard splitting which does not satisfy the even parity
condition.

Suppose $|D_i\cap E_j|$ is odd for some $i$ and $j$. For the
convenience, assume that $|D_1\cap E_1|$ is odd.
 We consider the simple closed curve $\gamma=T_{\partial E_1}(\partial
 D_1)$ obtained by twisting  $\partial D_1$ along $\partial
 E_1$. First we examine the intersection of $\gamma$ with $D_i$
 and $E_j$.

\begin{lemma}
The parity of number of intersections of $\gamma$ with $D_i$ and
$E_j$ are as follows $(\bmod\,{2})$.\\

\begin{itemize}
\item[$\bullet$] $|\gamma\cap D_1|\equiv {\textrm{odd}}$\\
\item[$\bullet$] $|\gamma\cap D_i|\equiv
{\textrm{odd}}\cdot|D_i\cap E_1|$\,\, $(i=2,\cdots,g)$\\
\item[$\bullet$] $|\gamma\cap E_1|\equiv {\textrm{odd}}$\\
\item[$\bullet$] $|\gamma\cap E_j|\equiv |D_1\cap E_j|$\,\,
$(j=2,\cdots,g)$
\end{itemize}
\end{lemma}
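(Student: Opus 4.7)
The plan is to derive each of the four bullets by a direct application of Lemma 4.1, taking $\gamma_1=\partial E_1$, $\gamma_2=\partial D_1$, and letting $\gamma_3$ vary. The key numerical inputs we have in hand are: the disks within each complete meridian disk system are pairwise disjoint, so $|\partial D_i\cap\partial D_j|=0$ and $|\partial E_i\cap\partial E_j|=0$ for $i\ne j$ (and trivially $|\partial D_i\cap\partial D_i|=0$ after choosing the disks properly), together with the standing hypothesis that $|D_1\cap E_1|$ is odd. Before counting, I would isotope $\gamma$ to minimal position with $\partial D_i$ and $\partial E_j$; Lemma 4.1 already absorbs any bigons into the mod $2$ statement, so this is automatic.

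For the first bullet, setting $\gamma_3=\partial D_1$ in Lemma 4.1 gives
$$|\gamma\cap D_1|\equiv|\partial D_1\cap\partial D_1|+|\partial E_1\cap\partial D_1|\cdot|\partial E_1\cap\partial D_1|\equiv 0+(\text{odd})^2\equiv 1\pmod 2.$$
For the second, taking $\gamma_3=\partial D_i$ with $i\ge 2$ kills the first term since $D_1$ and $D_i$ are disjoint, leaving $|\gamma\cap D_i|\equiv (\text{odd})\cdot|D_i\cap E_1|\pmod 2$, which is the claimed formula.

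For the third bullet, setting $\gamma_3=\partial E_1$ makes the second term $|\partial E_1\cap\partial D_1|\cdot|\partial E_1\cap\partial E_1|$ vanish, leaving $|\gamma\cap E_1|\equiv|D_1\cap E_1|\equiv 1\pmod 2$. For the fourth, taking $\gamma_3=\partial E_j$ with $j\ge 2$ again makes the product term vanish because $E_1$ and $E_j$ are disjoint, leaving $|\gamma\cap E_j|\equiv|D_1\cap E_j|\pmod 2$.

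There is no substantive obstacle here: the entire lemma is a bookkeeping exercise in four specializations of Lemma 4.1. The only point requiring any care is the observation that $\partial D_i$ and $\partial D_j$ (respectively $\partial E_i$ and $\partial E_j$) can be chosen disjoint as part of a complete meridian disk system, which is exactly what kills the terms that would otherwise clutter the statement.
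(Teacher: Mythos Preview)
Your proof is correct and matches the paper's own proof essentially line for line: both apply Lemma~4.1 with $\gamma_1=\partial E_1$, $\gamma_2=\partial D_1$, and $\gamma_3$ equal to $\partial D_1$, $\partial D_i$, $\partial E_1$, $\partial E_j$ in turn, using the disjointness of the $D_i$'s (resp.\ $E_j$'s) and the assumption that $|D_1\cap E_1|$ is odd to simplify each expression.
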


\begin{proof}
$\bullet$\, By Lemma 4.1 and assumption, $|\gamma\cap
D_1|=|T_{\partial E_1}(\partial D_1)\cap D_1|\equiv|D_1\cap
D_1|+|E_1\cap D_1|\cdot |E_1\cap D_1|\equiv {\textrm{odd}}$
($\bmod\,{2}$).

$\bullet$\, Since $D_1$ and $D_i$ are disjoint, $|\gamma\cap
D_i|=|T_{\partial E_1}(\partial D_1)\cap D_i|\equiv |D_1\cap
D_i|+|E_1\cap D_1|\cdot|E_1\cap
D_i|\equiv{\textrm{odd}}\cdot|D_i\cap E_1|$ ($\bmod\,{2}$)\,
($i=2,\cdots,g$).

$\bullet$\, By assumption, $|\gamma\cap E_1|=|T_{\partial
E_1}(\partial D_1)\cap E_1|\equiv|D_1\cap E_1|+|E_1\cap
D_1|\cdot|E_1\cap E_1|\equiv {\textrm{odd}}$ ($\bmod\,{2}$).

$\bullet$\, Since $E_1$ and $E_j$ are disjoint, $|\gamma\cap
E_j|=|T_{\partial E_1}(\partial D_1)\cap E_j|\equiv|D_1\cap
E_j|+|E_1\cap D_1|\cdot|E_1\cap E_j|\equiv|D_1\cap E_j|$
($\bmod\,{2}$)\, ($j=2,\cdots,g$).
\end{proof}

Now we consider the Dehn twist $T_{\gamma}$ of $S$. Consider the
images of $\partial D_1$ and $\partial D_i$ ($i=2,\cdots,g$) after
the Dehn twist $T_{\gamma}$. We examine intersections of
$T_{\gamma}(\partial D_1)$ and $T_{\gamma}(\partial D_i)$
($i=2,\cdots,g$) with $E_1$ and $E_j$ ($j=2,\cdots,g$).

\begin{lemma}
The parity of number of intersections of $T_{\gamma}(\partial
D_1)$ and $T_{\gamma}(\partial D_i)$ ($i=2,\cdots,g$) with $E_1$
and $E_j$
($j=2,\cdots,g$) are as follows $(\bmod\,{2})$.\\

\begin{itemize}
\item[$\bullet$] $|T_{\gamma}(\partial D_1)\cap E_1|\equiv {\textrm{even}}$\\
\item[$\bullet$] $|T_{\gamma}(\partial D_1)\cap E_j|\equiv
{\textrm{even}}$\,\, $(j=2,\cdots,g)$\\
\item[$\bullet$] $|T_{\gamma}(\partial D_i)\cap E_1|\equiv {\textrm{even}}$\,\,
$(i=2,\cdots,g)$\\
\item[$\bullet$] $|T_{\gamma}(\partial D_i)\cap E_j|\equiv
|D_i\cap E_j|+{\textrm{odd}}\cdot|D_i\cap E_1|\cdot|D_1\cap
E_j|$\,\, $(i=2,\cdots,g)$\, $(j=2,\cdots,g)$
\end{itemize}
\end{lemma}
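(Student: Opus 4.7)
The plan is to apply Lemma 4.1 directly in each of the four cases, substituting the parities obtained in Lemma 4.4. Since $\gamma = T_{\partial E_1}(\partial D_1)$, and we are applying a second Dehn twist $T_\gamma$, Lemma 4.1 gives, for any essential disk $D_i$ and any essential disk $E_j$,
\[
|T_\gamma(\partial D_i)\cap E_j|\equiv |D_i\cap E_j|+|\gamma\cap D_i|\cdot|\gamma\cap E_j|\pmod{2}.
\]
So the four bullet points reduce to plugging in the four values of $|\gamma\cap D_i|$ and $|\gamma\cap E_j|$ supplied by Lemma 4.4.

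For the first bullet, I take $i=j=1$, so the correction term is $|\gamma\cap D_1|\cdot|\gamma\cap E_1|$, which by Lemma 4.4 is $\text{odd}\cdot\text{odd}=\text{odd}$; adding this to $|D_1\cap E_1|\equiv\text{odd}$ yields an even sum. For the second bullet, $i=1$ and $j\geq 2$, so the correction term is $\text{odd}\cdot|D_1\cap E_j|\equiv|D_1\cap E_j|\pmod 2$, and adding this to $|D_1\cap E_j|$ gives an even sum. For the third bullet, $i\geq 2$ and $j=1$, so the correction term is $\text{odd}\cdot|D_i\cap E_1|\cdot\text{odd}\equiv |D_i\cap E_1|\pmod 2$, again cancelling $|D_i\cap E_1|$ modulo $2$. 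For the fourth bullet, with $i,j\geq 2$, the correction term is $\text{odd}\cdot|D_i\cap E_1|\cdot|D_1\cap E_j|$ by Lemma 4.4, and the formula in the lemma is just the raw output of Lemma 4.1 with this substitution, with no further simplification claimed.

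There is essentially no obstacle beyond bookkeeping: all three simple closed curves in Lemma 4.1 must be genuinely essential for the lemma to apply, and one should note that $\gamma = T_{\partial E_1}(\partial D_1)$ is essential because $\partial D_1$ is essential and Dehn twists are homeomorphisms. No additional disk or surgery argument is needed, because Lemma 4.1 already absorbs the possible creation of bigons in the mod-$2$ count. Thus the proof is a four-line verification, one line per bullet, each line being an application of Lemma 4.1 followed by a substitution from Lemma 4.4 and a parity simplification of the form $x+\text{odd}\cdot x\equiv 0\pmod 2$.
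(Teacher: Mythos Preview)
Your proof is correct and follows exactly the paper's approach: each bullet is obtained by applying Lemma~4.1 to $|T_{\gamma}(\partial D_i)\cap E_j|$ and then substituting the parities of $|\gamma\cap D_i|$ and $|\gamma\cap E_j|$ from Lemma~4.4, simplifying via $x+\textrm{odd}\cdot x\equiv 0\pmod 2$ in the first three cases.
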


\begin{proof}
$\bullet$\, By Lemma 4.1 and Lemma 4.4 and assumption,
$|T_{\gamma}(\partial D_1)\cap E_1|\equiv |D_1\cap
E_1|+|\gamma\cap D_1|\cdot|\gamma\cap
E_1|\equiv{\textrm{odd}}+{\textrm{odd}}\cdot{\textrm{odd}}\equiv{\textrm{even}}$
($\bmod\,{2}$).

$\bullet$\, By Lemma 4.1 and Lemma 4.4, $|T_{\gamma}(\partial
D_1)\cap E_j|\equiv|D_1\cap E_j|+|\gamma\cap D_1|\cdot|\gamma\cap
E_j|\equiv|D_1\cap E_j|+{\textrm{odd}}\cdot|D_1\cap
E_j|\equiv{\textrm{even}}\cdot|D_1\cap E_j|\equiv{\textrm{even}}$
($\bmod\,{2}$)\, ($j=2,\cdots,g$).

$\bullet$\, By Lemma 4.1 and Lemma 4.4, $|T_{\gamma}(\partial
D_i)\cap E_1|\equiv|D_i\cap E_1|+|\gamma\cap D_i|\cdot|\gamma\cap
E_1|\equiv|D_i\cap E_1|+{\textrm{odd}}\cdot|D_i\cap
E_1|\cdot{\textrm{odd}}\equiv{\textrm{even}}\cdot|D_i\cap
E_1|\equiv{\textrm{even}}$ ($\bmod\,{2}$)\, ($i=2,\cdots,g$).

$\bullet$\, By Lemma 4.1 and Lemma 4.4, $|T_{\gamma}(\partial
D_i)\cap E_j|\equiv|D_i\cap E_j|+|\gamma\cap D_i|\cdot|\gamma\cap
E_j|\equiv|D_i\cap E_j|+{\textrm{odd}}\cdot|D_i\cap
E_1|\cdot|D_1\cap E_j|$ ($\bmod\,{2}$)\, ($i=2,\cdots,g$)\,
($j=2,\cdots,g$).
\end{proof}

Note that $|T_{\gamma}(\partial D_i)\cap E_1|$ and
$|T_{\gamma}(\partial D_1)\cap E_j|$ are even for all
$i,j=1,2,\cdots,g$ and the parity of difference
$|T_{\gamma}(\partial D_i)\cap E_j|-|D_i\cap E_j|$ is equivalent
to the parity of $|D_i\cap E_1|\cdot|D_1\cap E_j|$ for
$i,j=2,\cdots,g$. Let $\mathcal{D}'=\{D'_1,D'_2,\cdots,D'_g\}$ be
the collection of essential disks after the Dehn twist
$T_{\gamma}$ satisfying $\partial D'_i=T_{\gamma}(\partial D_i)$.
Suppose $|D'_i\cap E_j|$ is odd for some $i$ and $j$. Without loss
of generality, we may assume that $|D'_2\cap E_2|$ is odd. Let
$\gamma'=T_{\partial E_2}(\partial D'_2)$. Again we do a Dehn
twist $T_{\gamma'}$ of the surface and examine the parities
$|T_{\gamma'}(\partial D'_i)\cap E_j|$. By a sequence of Dehn
twists in this way, we can get a Heegaard splitting satisfying the
even parity condition as the following.

\begin{theorem}
For any given genus $g$ Heegaard splitting $H_1\cup_S H_2$ and
collections of complete meridian disk systems $\mathcal{D}=\{D_1,
D_2,\cdots,D_g\}$ and $\mathcal{E}=\{E_1,E_2,\cdots,E_g\}$, we get
an unstabilized Heegaard splitting satisfying the even parity
condition after a sequence of at most $g$ Dehn twists.

More precisely, the sequence of Dehn twists is $T_{\gamma_1},
T_{\gamma_2}, \cdots, T_{\gamma_g}$, where\\

\begin{itemize}
\item[$\bullet$] $\gamma_1=T_{\partial E_{j_1}}(\partial
D_{i_1})$ for some $i_1$ and $j_1$ with $|D_{i_1}\cap E_{j_1}|\equiv$ odd\\
\item[$\bullet$] $\gamma_2=T_{\partial
E_{j_2}}(T_{\gamma_1}(\partial D_{i_2}))$ for some $i_2$ and $j_2$ with
$|T_{\gamma_1}(\partial D_{i_2})\cap E_{j_2}|\equiv$ odd\\
\item[$\bullet$] $\gamma_{k}=T_{\partial E_{j_k}}\circ
T_{\gamma_{k-1}}\circ\cdots\circ T_{\gamma_1}(\partial D_{i_k})$
for some $i_k$ and $j_k$ with $|(T_{\gamma_{k-1}}\circ\cdots\circ
T_{\gamma_1}(\partial D_{i_k}))\cap E_{j_k}|\equiv$ odd\, $(k\le
g)$
\end{itemize}
\end{theorem}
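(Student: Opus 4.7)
The plan is to proceed by induction on $k$, tracking the $g \times g$ parity matrix $A^{(k)}$ whose $(i,j)$-entry is $|D^{(k)}_i \cap E_j| \bmod 2$, where $\mathcal{D}^{(k)}$ denotes the meridian disk system obtained after $k$ Dehn twists (so $\mathcal{D}^{(0)} = \mathcal{D}$). The central inductive claim will be that after iteration $k$ one can choose distinct indices $i_1,\dots,i_k$ and distinct indices $j_1,\dots,j_k$ so that every entry of $A^{(k)}$ in the rows indexed by $i_1,\dots,i_k$ or in the columns indexed by $j_1,\dots,j_k$ is zero modulo $2$. Since $A^{(k)}$ has size $g \times g$, this forces the entire matrix to vanish after at most $g$ iterations, which is the even parity condition; Theorem 1.1 then yields unstabilizedness.

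For the inductive step, assume after $k-1$ twists the prescribed rows and columns of $A^{(k-1)}$ are all zero. If no odd entry remains, the process terminates early. Otherwise we select an odd entry $|D^{(k-1)}_{i_k} \cap E_{j_k}|$; necessarily $i_k \notin \{i_1,\dots,i_{k-1}\}$ and $j_k \notin \{j_1,\dots,j_{k-1}\}$, since those rows and columns are already even. Set $\gamma_k = T_{\partial E_{j_k}}(\partial D^{(k-1)}_{i_k})$ and apply $T_{\gamma_k}$ to every disk in $\mathcal{D}^{(k-1)}$. The fact that the $i_k$-th row and $j_k$-th column of $A^{(k)}$ become identically zero is exactly Lemma 4.5 with $(D^{(k-1)}_{i_k}, E_{j_k})$ playing the role of $(D_1, E_1)$.

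The delicate point is showing that this step does not destroy the evenness already achieved in the rows $i_1,\dots,i_{k-1}$ and columns $j_1,\dots,j_{k-1}$. Here the precise form of the fourth bullet of Lemma 4.5 is essential: for $i \ne i_k$ and $j \ne j_k$, the quantity $|T_{\gamma_k}(\partial D^{(k-1)}_i) \cap E_j|$ differs from $|D^{(k-1)}_i \cap E_j|$ by $|D^{(k-1)}_i \cap E_{j_k}| \cdot |D^{(k-1)}_{i_k} \cap E_j| \pmod 2$. If $i \in \{i_1,\dots,i_{k-1}\}$, the first factor is even by the inductive hypothesis; if $j \in \{j_1,\dots,j_{k-1}\}$, the second factor is even. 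Either way the correction vanishes modulo $2$, so the induction propagates.

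The main obstacle is essentially bookkeeping: verifying that Lemmas 4.4 and 4.5, stated for the distinguished pair $(D_1, E_1)$, transfer verbatim to later iterations where the disk system has already been modified by earlier Dehn twists. This is routine, since those lemmas rely only on the single hypothesis that $|D \cap E|$ is odd together with the general identity of Lemma 4.1, and the image of a Dehn-twisted essential disk is again an essential disk in the (new) handlebody obtained from Remark 4.3. Once this is confirmed, at most $g$ applications of Lemma 4.5 produce a disk system satisfying the even parity condition with $\mathcal{E}$, and Theorem 1.1 delivers the unstabilizedness of the resulting Heegaard splitting.
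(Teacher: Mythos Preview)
Your proposal is correct and follows essentially the same approach as the paper: iterate Lemma~4.5, at each stage zeroing out one new row and one new column of the mod~$2$ intersection matrix, while using the fourth bullet of Lemma~4.5 to verify that the correction term $|D^{(k-1)}_i\cap E_{j_k}|\cdot|D^{(k-1)}_{i_k}\cap E_j|$ vanishes on the previously cleared rows and columns. The paper carries this out concretely for the first two steps and then appeals to the evident pattern; your matrix formulation and explicit induction say the same thing more systematically, but there is no substantive difference in strategy.
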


\begin{proof}
As before, let $\gamma_1=T_{\partial E_1}(\partial D_1)$ without
loss of generality. Note that $|T_{\gamma_1}(\partial D_i)\cap
E_1|$ and $|T_{\gamma_1}(\partial D_1)\cap E_j|$ are even for all
$i,j=1,2,\cdots,g$ by Lemma 4.5. Let
$\mathcal{D}'=\{D'_1,D'_2,\cdots,D'_g\}$ be the collection of
essential disks after the Dehn twist $T_{\gamma_1}$ satisfying
$\partial D'_i=T_{\gamma_1}(\partial D_i)$. Let
$\gamma_2=T_{\partial E_2}(\partial D'_2)$ without loss of
generality. We examine the parity of $|T_{\gamma_2}(\partial
D'_i)\cap E_j|$. Let the notation
$\{1,2,\cdots,\hat{i},\cdots,g\}$ mean the set
$\{1,2,\cdots,i-1,i+1,\cdots,g\}$. By Lemma 4.5, we have the
following.\\

\begin{itemize}
\item[$\bullet$] $|T_{\gamma_2}(\partial D'_2)\cap E_2|\equiv {\textrm{even}}$\\
\item[$\bullet$] $|T_{\gamma_2}(\partial D'_2)\cap E_j|\equiv
{\textrm{even}}$\,\, $(j=1,\hat{2},\cdots,g)$\\
\item[$\bullet$] $|T_{\gamma_2}(\partial D'_i)\cap E_2|\equiv
{\textrm{even}}$\,\,
$(i=1,\hat{2},\cdots,g)$\\
\item[$\bullet$] $|T_{\gamma_2}(\partial D'_i)\cap E_j|\equiv
|D'_i\cap E_j|+{\textrm{odd}}\cdot|D'_i\cap E_2|\cdot|D'_2\cap
E_j|$\,\, $(i=1,\hat{2},\cdots,g)$\, $(j=1,\hat{2},\cdots,g)$\\
\end{itemize}

However, $|T_{\gamma_2}(\partial D'_1)\cap E_j|\equiv|D'_1\cap
E_j|+{\textrm{odd}}\cdot|D'_1\cap E_2|\cdot|D'_2\cap E_j|$ is
equal to $|T_{\gamma_1}(\partial D_1)\cap
E_j|+{\textrm{odd}}\cdot|T_{\gamma_1}(\partial D_1)\cap
E_2|\cdot|T_{\gamma_1}(\partial D_2)\cap E_j|$ and it is even
because $|T_{\gamma_1}(\partial D_1)\cap E_j|$ and
$|T_{\gamma_1}(\partial D_1)\cap E_2|$ are even by Lemma 4.5
again.

Also $|T_{\gamma_2}(\partial D'_i)\cap E_1|\equiv|D'_i\cap
E_1|+{\textrm{odd}}\cdot|D'_i\cap E_2|\cdot|D'_2\cap E_1|$ is
equal to $|T_{\gamma_1}(\partial D_i)\cap
E_1|+{\textrm{odd}}\cdot|T_{\gamma_1}(\partial D_i)\cap
E_2|\cdot|T_{\gamma_1}(\partial D_2)\cap E_1|$ and it is even
because $|T_{\gamma_1}(\partial D_i)\cap E_1|$ and
$|T_{\gamma_1}(\partial D_2)\cap E_1|$ are even by Lemma 4.5.

Hence we can see that $|T_{\gamma_2}(\partial D'_1)\cap E_j|$,
$|T_{\gamma_2}(\partial D'_2)\cap E_j|$, $|T_{\gamma_2}(\partial
D'_i)\cap E_1|$ and $|T_{\gamma_2}(\partial D'_i)\cap E_2|$ are
even for all $i$ and $j$. In this way, as we do sequence of Dehn
twists $T_{\gamma_k}$, the set of indices of even parity gets
bigger and bigger. So finally we get an unstabilized Heegaard
splitting satisfying the even parity condition after the sequence
of Dehn twists $T_{\gamma_1}, T_{\gamma_2}, \cdots, T_{\gamma_g}$.
\end{proof}

%


\begin{thebibliography}{99}

    \bibitem{CG}
    A. Casson \and C. Gordon,
    {\it Manifolds with irreducible Heegaard splittings of
    arbitrary large genus},
    Unpublished.

    \bibitem{Kobayashi1}
    T. Kobayashi,
    {\it A construction of $3$-manifolds whose homeomorphism
    classes of Heegaard splittings have polynomial growth},
    Osaka J. Math. {\bf 29} (1992) no. 4, 653--674.

    \bibitem{Kobayashi2}
    T. Kobayashi,
    {\it Casson-Gordon's rectangle condition of Heegaard diagrams
    and incompressible tori in $3$-manifolds},
    Osaka J. Math. {\bf 25} (1988) no. 3, 553-573.

    \bibitem{Kobayashi3}
    T. Kobayashi,
    {\it Heegaard splittings of exteriors of two bridge knots},
    Geom. and Topol. {\bf 5} (2001) 609--650.

    \bibitem{Lee}
    J. H. Lee,
    {\it Parity condition for irreducibility of Heegaard splittings},
    preprint, arXiv:0812.0225.

    \bibitem{MS}
    Y. Moriah \and J. Schultens,
    {\it Irreducible Heegaard splittings of Seifert fibered spaces
    are either vertical or horizontal},
    Topology {\bf 37} (1998) no. 5, 1089--1112.

    \bibitem{Saito}
    T. Saito,
    {\it Disjoint pairs of annuli and disks for Heegaard
    splittings},
    J. Korean Math. Soc. {\bf 42} (2005) no 4, 773--793.

    \bibitem{Sedgwick}
    E. Sedgwick,
    {\it The irreducibility of Heegaard splittings of Seifert fibered
    spaces},
    Pacific J. Math. {\bf 190} (1999) no 1, 173--199.

    \bibitem{Schultens}
    J. Schultens,
    {\it The classfication of Heegaard splittings for (compact
    orientable surface) $\times S^1$},
    Proc. London Math. Soc. (3) {\bf 67} (1993) no. 2, 425--448.
\end{thebibliography}
\end{document}